\newtheorem{defi}{Definition}
\crefname{defi}{Definition}{}
\newtheorem{lem}{Lemma}
\crefname{lem}{Lemma}{Lemmas}
\newtheorem{theo}{Theorem}
\crefname{theo}{Theorem}{Theorems}
\newtheorem{rem}{Remark}
\crefname{rem}{Remark}{}
\newcommand{\ep}{\varepsilon}
\newcommand{\la}{\Delta}
\newcommand{\diam}{{\rm diam}}
\newcommand{\dist}{{\rm dist}}
\newcommand{\N}{\mathbb{N}}
\newcommand{\J}{\mathcal{J}}
\newcommand{\R}{\mathbb{R}}
\newcommand{\Z}{\mathbb{Z}}
\newcommand{\duS}[2]{\ev{#1,#2}_{H^{-1}(\Omega)}}
\newcommand{\duL}[2]{\ev{#1,#2}_{L^1(\R^d)^*}}
\newcommand{\dmas}[2]{\int_{#1}#2d\sigma}
\newcommand{\open}[1]{\mathring{#1}}
\newcommand{\cls}[1]{\overline{#1}}
\newcommand{\pd}{\partial}
\newcommand{\SU}{\bigsqcup}
\begin{document}
\title[Poisson equation in domains with concentrated holes]{Poisson equation in domains with concentrated holes}
\author[H. Ishida]{Hiroto Ishida}
\address{Hiroto Ishida\\Graduate School of Science, University of Hyogo\\Shosha, Himeji, Hyogo 671-2201, Japan}
\email{immmrfff@gmail.com}
\classification{35B27.}
\keywords{Poisson problem, Homogenization.} 
\date{\today}
\maketitle
\begin{abstract}
We consider solutions $u^\ep$ of Poisson problems with the Dirichlet condition on domains $\Omega_\ep$ with holes concentrated at subsets of a domain $\Omega$ non-periodically. We show $u^\ep$ converges to a solution of a Poisson problem with a simple function potential. This is a generalized result of a sample model given by Cioranescu and Murat (1997). They showed a result for case that holes are distributed at $\Omega$ periodically.
\end{abstract}
\section{Introduction}
Let $\Omega\subset\R^d,d\geq 2$ be open and bounded with $C^2$ boundary. We consider a union $T_\ep$ of holes concentrated at subsets of $\R^d$ as \Cref{domep}, and domains $\Omega_\ep=\Omega\setminus T_\ep.$
We consider Poisson problems on $\Omega_\ep$ with the homogeneous Dirichlet condition with $f\in L^2(\Omega),$ that is,
\begin{equation}\label{solus}
u^\ep\in H_0^1(\Omega_\ep),~~-\la u^\ep=f.
\end{equation}
We will see $u^\ep$ converge to $u$ as $\ep\to 0$ which satisfies
\begin{equation}\label{PDE}
u\in H_0^1(\Omega),~~(-\la+V) u=f,
\end{equation}
where $V$ is a simple function.
Details of assumptions for $T_\ep$ and the main result are given in \Cref{ass}.
\begin{figure}\centering
\begin{overpic}[width=8cm]{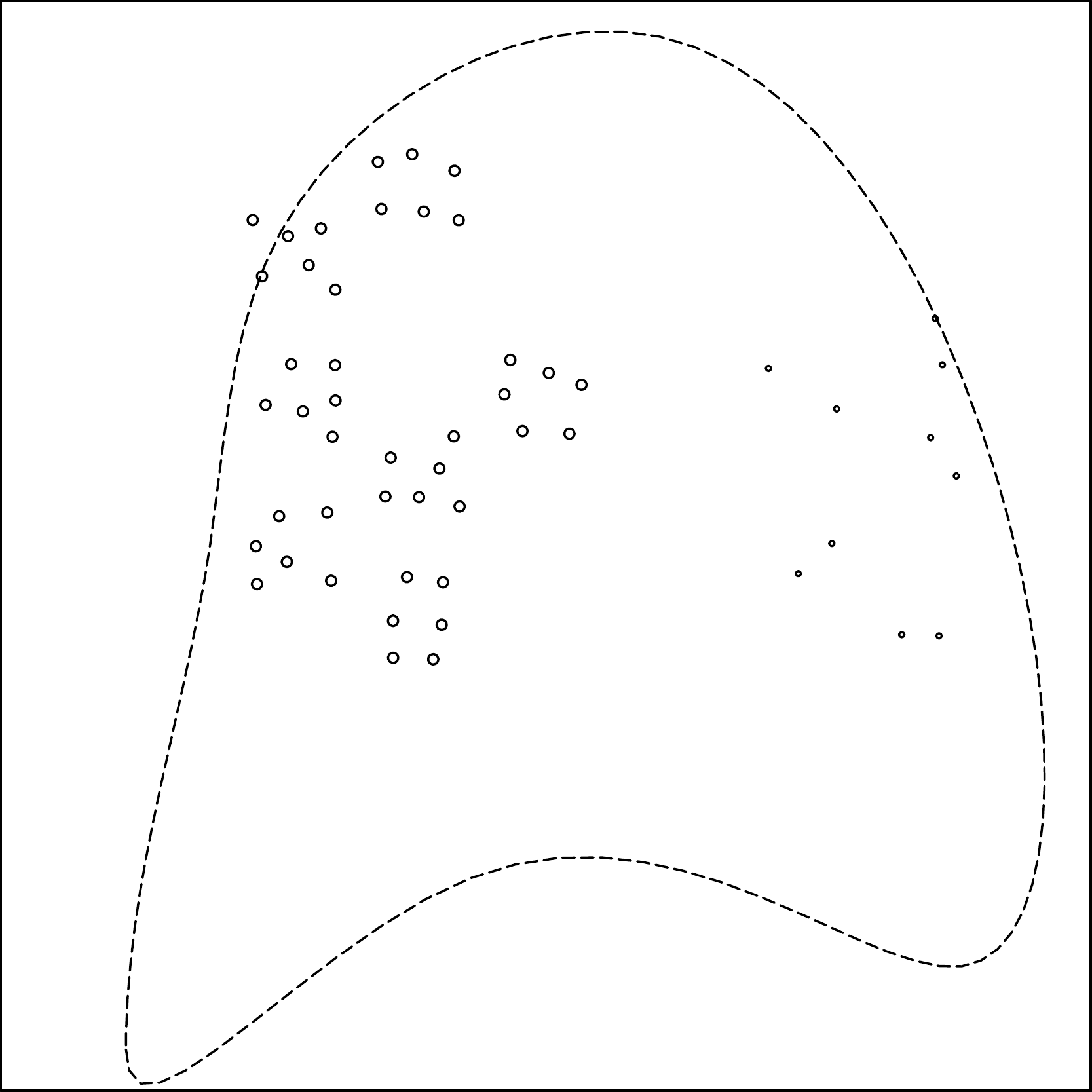}
\put(50,22){$\Omega$}
\put(35,60){$T_\ep$}
\end{overpic}
\caption{A domain $\Omega$ and holes $T_\ep.$\label{domep}}
\end{figure}
\subsection{Known results}
There are many contributions to characterize the limit $u$ of solutions $u^\ep$ on domains $\Omega_\ep$ when $\Omega_\ep\to\Omega$ in a proper sense.
The PDE of the form \eqref{PDE} is often used to characterize the limit $u$.
Many examples with $V=0$ are introduced at \cite{RT}, for example, $\Omega_\ep\to\Omega\setminus K$ metrically with thin $K.$

On the other hand, there are examples for which $V\neq 0.$
The case when $T_\ep=\bigcup_{i\in2\ep\Z^d}\cls{B(i,a_\ep)}$ with the critical radius $a_\ep$ is introduced at \cite[Example 2.1]{CM}, where $a_\ep$ satisfies the same condition for $a_{\ep,k}$ of \eqref{tmu} below. In this case, $V$ is a constant.
A similar result for Robin condition is given by \cite{K} with a different critical radius and a different constant $V$. These results can be regarded as a strong resolvent convergence of Laplacian, and they were improved to a norm resolvent convergence of Laplacian with Dirichlet, Robin and Neumann conditions by \cite{NRC}. In these cases, $V$ is still a constant.
 
Other examples for which $V\neq 0$ are also introduced at \cite[Example 2.9]{CM}. If $T_\ep$ is a union of holes on a hyper plane, $V$ is a Dirac measure supported on the hyper plane.

As for randomly perforated domains,
convergence of solutions in a proper sense with holes whose centers are generated by either Poisson or stationary point process is given by \cite{AG}, \cite{AGst} with a constant $V.$
\section{Assumption and the main result}\label{assresu}
\subsection{Assumption}\label{ass}
We denote Lebesgue measure on $\R^d$ by $|\cdot|.$ We use a class $\J$ of sets to determine where holes concentrate.
\begin{defi}
Let
\[\J=\{E\subset\R^d\mid|\pd E|=0\}.\]
\end{defi}
\begin{rem}
If $E\subset\R^d$ and $|\cls{E}|<\infty,$ $E\in\J$ if and only if $|\cls{E}|=|\open{E}|$ by $\pd{E}=\cls{E}\setminus|\open{E}|.$
Elements of $\J$ are measurable by completeness of Lebesgue measure.
\end{rem}
We shall construct holes $T_\ep$ as follows (see \Cref{sets}). Let $m\in\N$, $\{F_k\}_{k=1}^m\subset\J$ be a collection of disjoint sets and  $\{N_k\}_{k=1}^m\subset\N.$
We use $\SU$ instead of $\bigcup$ for the disjoint union of sets. Let $A\subset\R^d$ be measurable and bounded, and $\Lambda\subset\R^d$ be countable such that \begin{equation}\label{devRd}
\R^d=\SU_{i\in\Lambda}(A+i)~~(A+i=\{x+i\mid x\in A\}).
\end{equation}
For $x\in\R^d$ and $R>0,$ we denote $B(x,R)=\{y\in\R^d\mid|x-y|<R\}.$
 Choose small $C>0$ with
\begin{equation}\label{holesintiles}
|A|>\max_{k\leq m}{N_k}|B(0,C)|.
\end{equation}

We denote $A_i^\ep=\ep(A+i)=\{\ep x\mid x\in A+i\}.$ Remark $\R^d=\SU_{i\in\Lambda}A_i^\ep$ follows from \eqref{devRd} for each $\ep>0.$
\begin{defi}
For $E\subset\R^d$ and $\ep>0,$ let
\[\Lambda^-_\ep(E)
=\{i\in\Lambda\mid A_i^\ep\subset E\},
~~\Lambda^+_\ep(E)
=\{i\in\Lambda\mid A_i^\ep\cap E\neq\emptyset\}.\]
\end{defi}
For $\ep>0$ and $i\in\Lambda^-_\ep(F_k)$ (such $k$ is unique for each $i$), consider centers of holes
$\{x_{i,j}^\ep\mid j=1,...,N_k\}\subset\R^d$
with $\SU_{j=1}^{N_k}B(x_{i,j}^\ep,C\ep)\subset A_i^\ep$ for $\ep\ll 1.$ We omit to write ($\ep\to 0$) for convergence of sequences indexed by $\ep>0.$
Consider radii of holes $a_{\ep,k}$ with the following condition for $1\leq k\leq m$:
\begin{equation}\label{tmu}
\ep^{-d}\cross
\begin{cases}
(-\log a_{\ep,k})^{-1} & (d=2)\\
(a_{\ep,k})^{d-2}& (d\geq3)
\end{cases}
\to\tilde{\mu_k}\in[0,\infty).
\end{equation}
We recall that $\Omega$ is bounded, open with $C^2$ boundary.
We denote
\[
T_{\ep,k}=\SU_{i\in\Lambda^-_\ep(F_k),j\leq N_k}\cls{B(x_{i,j}^\ep,a_{\ep,k})},~~
T_\ep=\SU_{k=1}^m T_{\ep,k},~~
\Omega_\ep=\Omega\setminus
T_\ep.
\]
\begin{figure}[ht]\centering
\begin{overpic}[width=8cm]{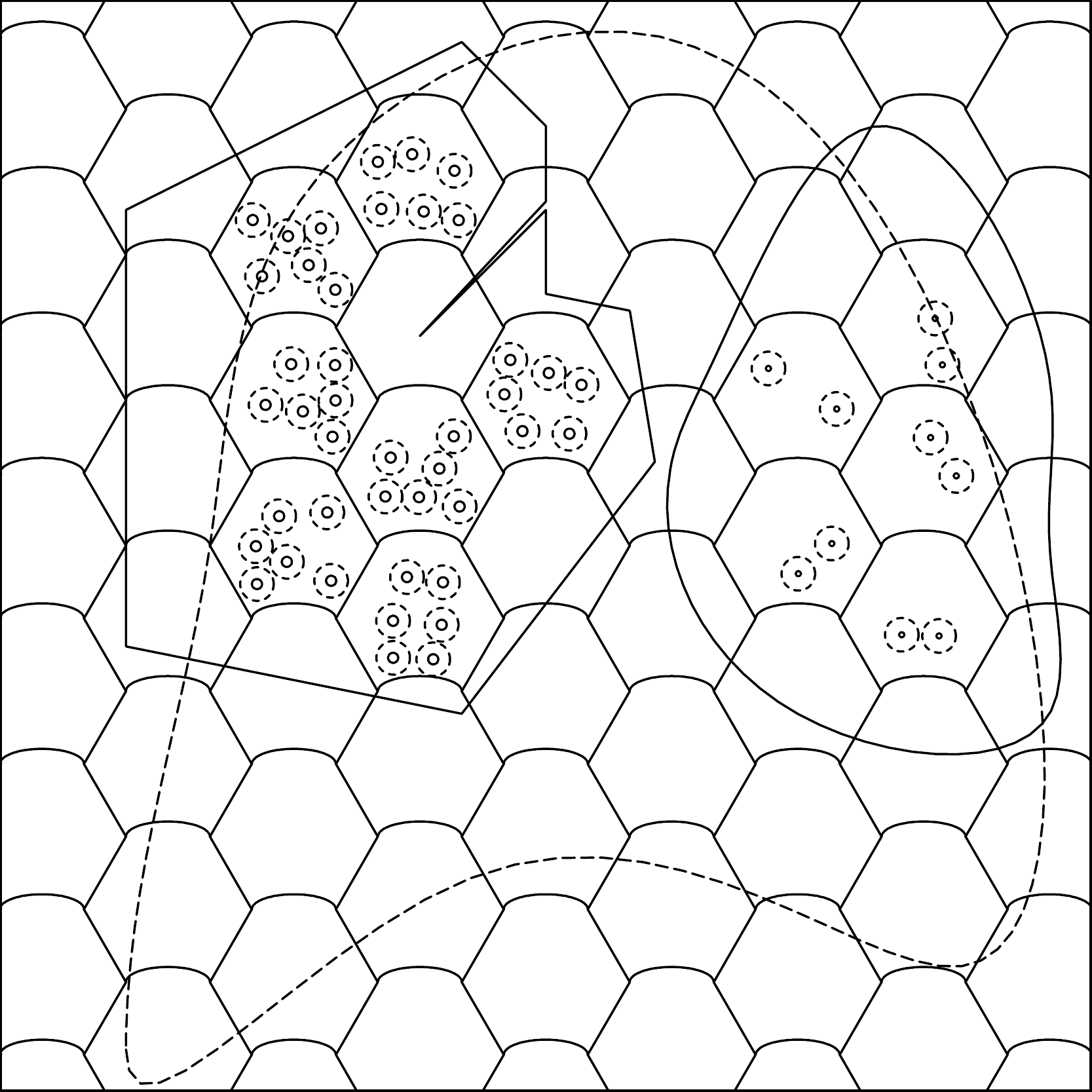}
\put(50,22){$\Omega$}
\put(12,56){$F_1$}
\put(62,56){$F_2$}
\put(59,30){$A_i^\ep$}
\end{overpic}
\caption{Construction of holes $T_\ep$ with $m=2,~N_1=6,~N_2=2.$\label{sets}}
\end{figure}
\subsection{Result}
Using the surface area $S_d$ of $\pd B(0,1),$ we write
$\mu_d=\frac{S_d}{|A|}\cross
\begin{cases}
1&(d=2)\\
d-2&(d\geq3)
\end{cases}.$
For $E\subset\R^d,$ we denote $1_E(x)=\begin{cases}
1 & (x\in E)\\
0 & (x\notin E)
\end{cases}.$
Our main result is stated as follows.
\begin{theo}\label{resu}
Under the assumptions as in \Cref{ass},
$u^\ep$ in \eqref{solus} converges to 
$u$ weakly in $H_0^1(\Omega)$ and the limit $u$ solves \eqref{PDE} with
\[V=\mu_d\sum_{k=1}^m\tilde{\mu_k}N_k1_{F_k}.\]
\end{theo}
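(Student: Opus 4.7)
My plan is the Cioranescu--Murat oscillating-test-function method, adapted to the non-periodic multi-type hole arrangement.

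Testing \eqref{solus} with $u^\ep$ and extending by zero to $\Omega$ gives $\|u^\ep\|_{H_0^1(\Omega)}\leq C\|f\|_{L^2}$ via the Poincar\'e inequality on $\Omega$. Along a subsequence, $u^\ep\rightharpoonup u$ in $H_0^1(\Omega)$ and $u^\ep\to u$ in $L^2(\Omega)$ by Rellich. Since \eqref{PDE} has a unique solution, it suffices to verify that any such limit $u$ satisfies \eqref{PDE}; the full sequence then converges.

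Choose $0<c<C$ and set $r_\ep=c\ep$, so that within each cell $A_i^\ep$ with $i\in\Lambda^-_\ep(F_k)$ the balls $B(x_{i,j}^\ep,r_\ep)$ are pairwise disjoint and contained in $A_i^\ep$. Construct a corrector $w^\ep\in H^1(\R^d)$ by $w^\ep=0$ on each $\cls{B(x_{i,j}^\ep,a_{\ep,k})}$, $w^\ep$ equal to the radial harmonic capacity potential (logarithmic for $d=2$, power $r^{2-d}$ for $d\geq3$) on each annulus $a_{\ep,k}<|x-x_{i,j}^\ep|<r_\ep$, and $w^\ep=1$ elsewhere. An explicit capacity computation combined with \eqref{tmu} shows $\|\nabla w^\ep\|_{L^2(\Omega)}$ is uniformly bounded, $w^\ep\to 1$ strongly in $L^p_{\rm loc}(\R^d)$ for every $p<\infty$, and $\nabla w^\ep\rightharpoonup 0$ weakly in $L^2$.

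For $\varphi\in C_c^\infty(\Omega)$, the function $\varphi w^\ep$ is admissible in \eqref{solus}. Expanding the gradient and rearranging yields
\begin{equation*}
\int_\Omega w^\ep\nabla u^\ep\cdot\nabla\varphi\,dx+\int_\Omega\nabla w^\ep\cdot\nabla(\varphi u^\ep)\,dx-\int_\Omega u^\ep\nabla w^\ep\cdot\nabla\varphi\,dx=\int_\Omega f\varphi w^\ep\,dx.
\end{equation*}
The outer two left-hand terms converge, respectively, to $\int\nabla u\cdot\nabla\varphi$ and $0$ by the strong/weak convergences of $w^\ep$ and $\nabla w^\ep$, and the right side tends to $\int f\varphi$. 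For the middle term, integration by parts on each corona (where $w^\ep$ is harmonic), combined with $u^\ep=0$ on every inner sphere $\pd B(x_{i,j}^\ep,a_{\ep,k})\subset T_\ep$, gives
\begin{equation*}
\int_\Omega\nabla w^\ep\cdot\nabla(\varphi u^\ep)\,dx=\sum_{k=1}^m\sum_{i\in\Lambda^-_\ep(F_k)}\sum_{j=1}^{N_k}\int_{\pd B(x_{i,j}^\ep,r_\ep)}\varphi u^\ep\,\pd_r w^\ep\,dS.
\end{equation*}
The outer-sphere normal derivative, read off from the capacity potential and combined with \eqref{tmu}, produces per-hole weights asymptotic to $\mu_d\tilde{\mu_k}|A|\ep^d$. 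Recognizing the sum over $i$ as a Riemann-type sum with cell volume $\ep^d|A|$ and using $F_k\in\J$ to kill the contribution from the boundary-straddling cells $\Lambda^+_\ep(F_k)\setminus\Lambda^-_\ep(F_k)$, the limit equals $\mu_d\sum_k\tilde{\mu_k}N_k\int_{F_k}\varphi u=\int_\Omega V\varphi u$. Collecting terms gives the weak form of \eqref{PDE}.

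The most delicate step is this last Riemann-sum convergence: one must pass from the spherical averages of $\varphi u^\ep$ over $\pd B(x_{i,j}^\ep,r_\ep)$ to $\int_{F_k}\varphi u$ while $u^\ep$ is only bounded in $H^1$. The ingredients are strong $L^2$ convergence of $u^\ep$, a rescaled trace estimate comparing spherical averages on $\pd B(x_{i,j}^\ep,r_\ep)$ to local cell averages of $u^\ep$, and the hypothesis $|\pd F_k|=0$ which ensures the boundary-straddling cells have total measure tending to $0$.
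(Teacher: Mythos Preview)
Your approach is correct and is essentially the same as the paper's: both are the Cioranescu--Murat oscillating-test-function scheme with the same radial capacity corrector $w^\ep$. The paper quotes the abstract framework \cite[Theorem~1.2]{CM} and verifies hypotheses (H.1)--(H.5), whereas you carry out the underlying computation directly; these are the same argument packaged differently.

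The one substantive technical difference is how the surface-integral term
\[
\sum_{k,i,j}\int_{\pd B(x_{i,j}^\ep,r_\ep)}\varphi u^\ep\,\pd_r w^\ep\,dS
\]
is passed to the limit. You propose rescaled trace estimates to compare spherical averages with cell averages, which works but, as you say, is delicate. The paper avoids traces entirely by introducing an auxiliary quadratic function $q^\ep(x)=\tfrac12(|x-x_{i,j}^\ep|^2-(C\ep)^2)$ on each ball, which satisfies $-\la q^\ep=-d$ and $\pd_r q^\ep|_{r=C\ep}=C\ep$. One integration by parts converts the surface integral into
\[
\frac{1}{C\ep}\Bigl(\int_{B_{\ep,k}}\nabla q^\ep\cdot\nabla(\varphi u^\ep)\,dx+d\int_{B_{\ep,k}}\varphi u^\ep\,dx\Bigr),
\]
where the first volume term is $O(\ep)$ since $|\nabla q^\ep|\leq C\ep$, and the second is handled by the weak-$*$ limit $1_{B_{\ep,k}}\rightharpoonup \tfrac{N_k|B(0,C)|}{|A|}1_{F_k}$ in $L^\infty$ together with strong $L^2$ convergence of $u^\ep$. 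This device replaces your trace/Riemann-sum step with a purely volumetric argument, and the hypothesis $|\pd F_k|=0$ enters only through the weak-$*$ convergence of $1_{B_{\ep,k}}$.
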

\begin{rem}
\cite[Example 2.1]{CM} is just \Cref{resu} with $F_1=\R^d,A=[-1,1)^d,\Lambda=2\Z^d,N_1=1,x_{i,1}^\ep=i\ep.$ It means holes are distributed on $\Omega$ periodically. We generalized it for the case where holes distributed concentrated at $F_k$ non-periodically. Moreover, each $F_k$ can have different density $\tilde{\mu_k}N_k$.
\end{rem}
\subsection{Outline of proof}
The proof of our main result is based on the theorem below.
\begin{theo}[{\cite[Theorem 1.2]{CM}}]\label{CM}
Assume that $T_\ep\subset\R^d$ is closed for each $\ep>0.$ Assume there is a sequence
\begin{equation}\tag{H.1}\label{H1}
\{w^\ep\}\subset H^1(\Omega)
\end{equation}
satisfying
\begin{equation}\tag{H.2}\label{H2}
w^\ep=0\mbox{ on }T_\ep\mbox{ for each }\ep>0,
\end{equation}
\begin{equation}\tag{H.3}\label{H3}
w^\ep\to 1\mbox{ weakly in }H^1(\Omega),
\end{equation}
and there is
\begin{equation}\tag{H.4}\label{H4}
V\in W^{-1,\infty}(\Omega)
\end{equation}
(thus, $V\in H^{-1}(\Omega)$) such that
\begin{equation}\tag{H.5}\label{H5}
\begin{aligned}
&\duS{-\la w^\ep}{\varphi v^\ep}\to\duS{V}{\varphi v}\\
&\mbox{ if }\varphi\in C_0^\infty(\Omega),~
v^\ep=0\mbox{ on }T_\ep.~v^\ep\to v \mbox{ weakly in }H^1(\Omega).
\end{aligned}
\end{equation}
Then, $u^\ep$ in \eqref{solus} converges to 
$u\in H_0^1(\Omega)$ weakly in $H_0^1(\Omega)$ where $u$ is solution to \eqref{PDE}.
\end{theo}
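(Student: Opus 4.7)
The plan is to extract a weak limit of $u^\ep$ by standard energy estimates and Rellich compactness, then identify the equation satisfied by the limit by testing \eqref{solus} against a product built from the corrector $w^\ep$. First, extending each $u^\ep$ by zero from $\Omega_\ep$ to $\Omega$ yields an element of $H_0^1(\Omega)$; testing \eqref{solus} with $u^\ep$ itself and invoking Poincar\'e's inequality gives the uniform bound $\|u^\ep\|_{H_0^1(\Omega)}\leq C\|f\|_{L^2}$. Along a subsequence we therefore have $u^\ep\rightharpoonup u$ weakly in $H_0^1(\Omega)$ and strongly in $L^2(\Omega)$ by Rellich, and similarly \eqref{H3} together with Rellich gives $w^\ep\to 1$ strongly in $L^2(\Omega)$ with $\nabla w^\ep\rightharpoonup 0$ in $L^2$.

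The heart of the proof is the identification of the limit equation. Fix $\varphi\in C_0^\infty(\Omega)$ and observe that $\varphi w^\ep\in H_0^1(\Omega)$ vanishes on $T_\ep$ by \eqref{H2}, hence is admissible in \eqref{solus}:
\[
\int_\Omega\varphi\,\nabla u^\ep\cdot\nabla w^\ep+\int_\Omega w^\ep\nabla u^\ep\cdot\nabla\varphi=\int_\Omega f\varphi w^\ep.
\]
The first term is not amenable to direct limit passage since neither $\nabla u^\ep$ nor $\nabla w^\ep$ converges strongly. The key move is the distributional identity
\[
\int_\Omega\varphi\,\nabla u^\ep\cdot\nabla w^\ep=\duS{-\la w^\ep}{\varphi u^\ep}-\int_\Omega u^\ep\,\nabla\varphi\cdot\nabla w^\ep,
\]
which rewrites the bad term in precisely the shape required by \eqref{H5} with $v^\ep=u^\ep$, $v=u$ (admissible since the zero-extended $u^\ep$ vanishes on $T_\ep$ and converges weakly in $H^1(\Omega)$).

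Now I pass to the limit term by term: the cross term $\int_\Omega u^\ep\nabla\varphi\cdot\nabla w^\ep$ vanishes because $u^\ep\nabla\varphi\to u\nabla\varphi$ strongly in $L^2$ against $\nabla w^\ep\rightharpoonup 0$; the term $\int_\Omega w^\ep\nabla u^\ep\cdot\nabla\varphi$ tends to $\int_\Omega\nabla u\cdot\nabla\varphi$ by the same device with roles swapped; and $\int_\Omega f\varphi w^\ep\to\int_\Omega f\varphi$ since $f\varphi\in L^2$. Combined with \eqref{H5} this produces
\[
\duS{V}{\varphi u}+\int_\Omega\nabla u\cdot\nabla\varphi=\int_\Omega f\varphi,
\]
which is exactly the weak form of \eqref{PDE}. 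To promote this to convergence of the full sequence I would verify $V\geq 0$ in $H^{-1}(\Omega)$: applying \eqref{H5} with $v^\ep=w^\ep$, $v=1$ and non-negative test $\varphi\in C_0^\infty(\Omega)$, the same cross-term manipulation yields $\duS{V}{\varphi}=\lim_{\ep\to 0}\int_\Omega\varphi|\nabla w^\ep|^2\geq 0$, which with Poincar\'e gives coercivity of $a(u,v)=\int\nabla u\cdot\nabla v+\duS{V}{uv}$ on $H_0^1(\Omega)$; by Lax--Milgram the limit equation has a unique solution, so every subsequence has the same limit and the full sequence converges. The main obstacle is exactly the cross term $\int_\Omega\varphi\nabla u^\ep\cdot\nabla w^\ep$: without the reformulation into $\duS{-\la w^\ep}{\cdot}$ both factors oscillate and no standard compactness argument suffices, and this is precisely where the full content of hypothesis \eqref{H5} is consumed.
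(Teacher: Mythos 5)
This theorem is quoted verbatim from Cioranescu--Murat (\cite[Theorem 1.2]{CM}) and the paper supplies no proof of it, so there is no in-paper argument to compare against; your proposal reconstructs the standard proof from the cited source, and it is correct. The key identity $\int_\Omega\varphi\,\grad u^\ep\cdot\grad w^\ep=\duS{-\la w^\ep}{\varphi u^\ep}-\int_\Omega u^\ep\,\grad\varphi\cdot\grad w^\ep$, the strong-times-weak limit passages, and the positivity $\duS{V}{\varphi}=\lim_\ep\int_\Omega\varphi|\grad w^\ep|^2\geq0$ obtained by taking $v^\ep=w^\ep$ in \eqref{H5} are exactly the steps of the original argument. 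Two points you leave implicit but which are where \eqref{H4} is actually consumed: the term $\duS{V}{uv}$ in the limit bilinear form must be read through the $W^{-1,\infty}(\Omega)$--$W_0^{1,1}(\Omega)$ duality, since $uv$ for $u,v\in H_0^1(\Omega)$ lies only in $W_0^{1,1}(\Omega)$; and extending the identity from $\varphi\in C_0^\infty(\Omega)$ to the full weak formulation of \eqref{PDE} requires a density argument in that same duality. Neither is a gap in the idea, but both deserve a sentence in a complete write-up.
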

We check the conditions \eqref{H1}--\eqref{H5}  to prove \Cref{resu}. As mentioned in \cite{CM}, it is not unusual that assuming the condition \eqref{H5}.\medskip

We first prepare some lemmas in \Cref{appset}, and we introduce $w^\ep$ and verify the conditions \eqref{H1}--\eqref{H4} in \Cref{corr}. Finally, we check the condition \eqref{H5} in \Cref{end} and complete the proof of  \Cref{resu}.
\section{Proof}\label{proof}
\subsection{Approximation of sets by tiles \texorpdfstring{$A_i^\ep$}{}}\label{appset}
We first state some properties for $\J.$
\begin{lem}\label{capbou}
Let $E_1,E_2\in\J,$ then $|\cls{E_1\cap E_2}|=|(E_1\cap E_2)^\circ|.$
\end{lem}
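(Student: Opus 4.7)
The plan is to reduce the claimed equality to showing that $E_1\cap E_2\in\J$, i.e.\ $|\pd(E_1\cap E_2)|=0$. For an arbitrary set $E\subset\R^d$ one has the disjoint decomposition $\cls{E}=E^\circ\sqcup\pd E$, so
\[
|\cls{E_1\cap E_2}|=|(E_1\cap E_2)^\circ|+|\pd(E_1\cap E_2)|,
\]
and the lemma follows at once if the last term vanishes. (Both $\cls{E_1\cap E_2}$ and $(E_1\cap E_2)^\circ$ are Borel, so their Lebesgue measures are well defined without any prior measurability of $E_1\cap E_2$ itself.)

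The only nontrivial point is the standard topological inclusion
\[
\pd(E_1\cap E_2)\subset \pd E_1\cup\pd E_2.
\]
I would verify this by contradiction: if $x\in\pd(E_1\cap E_2)$, then $x\in\cls{E_1\cap E_2}\subset\cls{E_1}\cap\cls{E_2}$, so $x\in\cls{E_1}$ and $x\in\cls{E_2}$; were $x$ outside both $\pd E_1$ and $\pd E_2$, we would have $x\in E_1^\circ\cap E_2^\circ\subset(E_1\cap E_2)^\circ$, contradicting $x\in\pd(E_1\cap E_2)$.

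Combining the above inclusion with monotonicity and subadditivity of Lebesgue outer measure and the hypothesis $|\pd E_1|=|\pd E_2|=0$ gives $|\pd(E_1\cap E_2)|=0$, which closes the argument. I do not anticipate a real obstacle: the entire proof is a direct unraveling of the definition of $\J$, and the only ingredient beyond routine measure-theoretic bookkeeping is the elementary boundary inclusion above.
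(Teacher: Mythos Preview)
Your proof is correct and follows essentially the same route as the paper: both arguments rest on the decomposition $\cls{E}=\open{E}\sqcup\pd E$ and on bounding the ``excess'' by a subset of $\pd E_1\cup\pd E_2$, which is null by hypothesis. The only cosmetic difference is that you package this as the slightly stronger statement $E_1\cap E_2\in\J$, whereas the paper writes the inclusion $\cls{E_1\cap E_2}\subset(E_1\cap E_2)^\circ\cup E$ with $|E|=0$ directly via the distributive expansion of $(\open{E_1}\sqcup\pd E_1)\cap(\open{E_2}\sqcup\pd E_2)$.
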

\begin{proof}
A distributive property for sets shows
$\cls{E_1\cap E_2}
\subset\cls{E_1}\cap\cls{E_2}
=(\open{E_1}\sqcup\pd{E_1})\cap(\open{E_2}\sqcup\pd{E_2})
=(\open{E_1}\cap\open{E_2})\cup E
=(E_1\cap E_2)^\circ\cup E
$ with some $E$ satisfying $|E|=0.$
\end{proof}
\begin{defi}
For $E\subset\R^d$ and $\ep>0,$ let
\[
A_\ep^\pm(E)=\SU_{i\in\Lambda^\pm_\ep(E)}A_i^\ep.
\]
\end{defi}
We remark that $A^-_\ep(E)\subset E\subset A^+_\ep(E).$ We will see that they are approximations of $E$  by \Cref{tilesup,tileinf} below.
\begin{lem}\label{tilesup}
Let $E\subset\R^d$ be measurable and bounded, and satisfy $|E|=|\cls{E}|.$ Then $|A_\ep^+(E)|$ $\to |E|.$
\end{lem}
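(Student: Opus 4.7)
The plan is to sandwich $|A_\ep^+(E)|$ between a trivial lower bound and an upper bound coming from the fact that tiles counted by $\Lambda_\ep^+(E)$ cannot be far from $E$.

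First I would note the easy inclusion $E \subset A_\ep^+(E)$, which gives $|E| \leq |A_\ep^+(E)|$ directly from the definition of $\Lambda_\ep^+(E)$ (every $x \in E$ belongs to some tile $A_i^\ep$, which then lies in $\Lambda_\ep^+(E)$ by the partition \eqref{devRd}). So the only content is the reverse bound $\limsup_{\ep \to 0} |A_\ep^+(E)| \leq |E|$.

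For this upper bound, the key observation is that $A$ is bounded, so $D := \diam(A) < \infty$, and every tile $A_i^\ep$ has diameter at most $\ep D$. If $i \in \Lambda_\ep^+(E)$ then $A_i^\ep \cap E \neq \emptyset$, so picking $y \in A_i^\ep \cap E$ yields $A_i^\ep \subset \overline{B(y,\ep D)} \subset \{x \in \R^d \mid \dist(x,\cls{E}) \leq \ep D\}$. Taking the union over $i \in \Lambda_\ep^+(E)$ gives
\[
A_\ep^+(E) \subset K_\ep := \{x \in \R^d \mid \dist(x,\cls{E}) \leq \ep D\}.
\]
The sets $K_\ep$ are closed and decrease as $\ep \downarrow 0$, with $\bigcap_{\ep>0} K_\ep = \cls{E}$ (since $\cls{E}$ is closed). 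Because $E$ is bounded, $K_1$ has finite Lebesgue measure, so continuity of Lebesgue measure from above yields $|K_\ep| \to |\cls{E}|$. Using the hypothesis $|E| = |\cls{E}|$, we conclude $\limsup |A_\ep^+(E)| \leq |\cls{E}| = |E|$, which combined with the lower bound completes the proof.

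There is no real obstacle here; the main thing to be careful about is to invoke the finiteness $|K_1| < \infty$ (which comes from boundedness of $E$ together with boundedness of $A$) before applying continuity of measure from above, and to use the hypothesis $|E| = |\cls{E}|$ at the very last step.
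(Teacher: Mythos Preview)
Your proof is correct and is essentially identical to the paper's: the paper sets $d_\ep=\diam(\ep A)$ and sandwiches $E\subset A_\ep^+(E)\subset E_\ep:=\bigcup_{x\in E}\cls{B(x,d_\ep)}$, which is exactly your $K_\ep$ since $\dist(x,E)=\dist(x,\cls E)$, and then uses $\bigcap_{\ep>0}E_\ep=\cls E$ together with continuity of measure from above and the hypothesis $|E|=|\cls E|$. The only cosmetic difference is the description of the enlarged set.
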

\begin{proof}
Let $d_\ep=\diam(\ep A).$ Then  $d_\ep\to 0.$ 
Let $E_\ep=\bigcup_{x\in E}\cls{B(x,d_\ep)}.$ Then  $\bigcap_{\ep>0} E_\ep=\cls E$ and $|E_\ep|<\infty.$ Thus $|E_\ep|\to|\cls{E}|=|E|.$ The assertion follows from it and $E_\ep\supset A_\ep^+(E)\supset E.$
\end{proof}
\begin{lem}\label{tileinf}
Let $E\subset\R^d$ be a measurable set such that  $|\open{E}|=|E|.$ Then $|A_\ep^-(E)|\to |E|.$
\end{lem}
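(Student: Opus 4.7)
The plan is to mirror the proof of \Cref{tilesup} by constructing an inner approximation $E_\ep^-$ of $E$ that is trapped inside $A_\ep^-(E)$, and then passing to the limit via continuity of Lebesgue measure.

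First, set $d_\ep = \diam(\ep A) = \ep\diam(A)$, so $d_\ep \to 0$ monotonically. Define
\[
E_\ep^- = \{x \in \R^d \mid B(x,d_\ep) \subset E\}.
\]
The key inclusion is $E_\ep^- \subset A_\ep^-(E) \subset E$. Indeed, for any $x \in E_\ep^-$, let $i \in \Lambda$ be the unique index with $x \in A_i^\ep$ (which exists by \eqref{devRd}). Since $\diam(A_i^\ep) = d_\ep$, we have $A_i^\ep \subset B(x,d_\ep) \subset E$, hence $i \in \Lambda_\ep^-(E)$ and $x \in A_\ep^-(E)$.

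Next I would show that $\bigcup_{\ep>0} E_\ep^- = \open E$, which is where the hypothesis $|\open E| = |E|$ gets used. The inclusion $E_\ep^- \subset \open E$ holds because $B(x,d_\ep)$ is an open subset of $E$, so it lies in $\open E$, and in particular $x \in \open E$. Conversely, if $x \in \open E$ there exists $\delta > 0$ with $B(x,\delta) \subset E$, and then $x \in E_\ep^-$ whenever $d_\ep < \delta$. Since $d_\ep$ is monotone in $\ep$, the family $\{E_\ep^-\}$ is increasing as $\ep \to 0$.

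Finally, picking any decreasing sequence $\ep_n \to 0$, the sets $E_{\ep_n}^-$ form an increasing family with union $\open E$, so by continuity of Lebesgue measure from below, $|E_{\ep_n}^-| \to |\open E| = |E|$. Sandwiching via $|E_\ep^-| \le |A_\ep^-(E)| \le |E|$ yields $|A_\ep^-(E)| \to |E|$. There is no real obstacle here; the only point requiring care is verifying $E_\ep^- \subset \open E$ (as opposed to just $E_\ep^- \subset E$), which uses that $B(x,d_\ep)$ is open — this is precisely the step that brings in the hypothesis $|\open E| = |E|$ and distinguishes this inner-approximation argument from the outer one in \Cref{tilesup}.
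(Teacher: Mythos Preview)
Your approach is essentially the same as the paper's: the paper also sets $d_\ep=\diam(\ep A)$, defines an inner approximation $V_{-\ep}=\{x\in\open E:\dist(x,\pd\open E)>d_\ep\}$, checks $V_{-\ep}\subset A_\ep^-(\open E)\subset E$ and $\bigcup_{\ep>0}V_{-\ep}=\open E$, and concludes by monotone continuity of Lebesgue measure. One small technical slip: from $\diam(A_i^\ep)=d_\ep$ you only get $A_i^\ep\subset\cls{B(x,d_\ep)}$, not the open ball, so you should define $E_\ep^-=\{x:\cls{B(x,d_\ep)}\subset E\}$ (or use radius $2d_\ep$); the paper avoids this by using the strict inequality $\dist(x,\pd\open E)>d_\ep$.
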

\begin{proof}
Let $V=\open{E},g(x)=\dist(x,\pd V),d_\ep=\diam(\ep A)$ and $V_{-\ep}=V\cap g^{-1}((d_\ep,\infty)).$
Then $\bigcup_{\ep>0} V_{-\ep}=V$ since $V$ is open.
The assertion follows from $V_{-\ep}\subset A_\ep^-(V)\subset E.$ We verify $V_{-\ep}\subset A_\ep^-(V).$
Let $x\in V_{-\ep}.$ There is $i\in\Lambda$ that $x\in A_i^\ep$. We show $i\in\Lambda^-_\ep(V).$ It is equivalence with $\R^d\setminus V\subset \R^d\setminus A_i^\ep.$ If $y\notin V,$ we can get $p\in\pd V$ from line segment which contain $\{x,y\}$. It is $p_t=(1-t) x+t y$ with minimal $t\in[0,1]$ that $p_t\notin V.$ Construction of $p$ imply $|x-y|=|x-p|+|p-y|\geq\dist(x,\pd V)>d_\ep.$ Thus $y\notin A_i^\ep.$ Thus $i\in\Lambda^-_\ep(V).$
\end{proof}
We can count how many tiles $A^\pm_\ep(E)$ has.
\begin{lem}\label{counttiles}
For $E\subset\R^d$ and $\ep>0,$ the number of elements of $\Lambda^\pm_\ep(E)$ is $\frac{|A^\pm_\ep(E)|}{\ep^d|A|}.$
\end{lem}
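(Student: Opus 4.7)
The statement is essentially a bookkeeping identity, so the plan is short and the main work is just to verify that the pieces $A_i^\ep$ making up $A^\pm_\ep(E)$ are genuinely disjoint and each have the same Lebesgue measure. I would prove it by direct computation of the measure of the disjoint union defining $A^\pm_\ep(E)$.

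First I would observe that by the assumption \eqref{devRd}, the translates $A+i$ for $i\in\Lambda$ partition $\R^d$, so scaling by $\ep$ shows that the tiles $\{A_i^\ep\}_{i\in\Lambda}$ also form a pairwise disjoint family (with union $\R^d$). In particular, the subfamily indexed by $\Lambda^\pm_\ep(E)$ is pairwise disjoint, which matches the $\SU$ in the definition of $A^\pm_\ep(E)$.

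Next I would compute $|A_i^\ep|$. Since $A_i^\ep=\ep(A+i)$, translation invariance and the scaling property of Lebesgue measure give $|A_i^\ep|=\ep^d|A+i|=\ep^d|A|$, independent of $i$. Here $A$ is measurable by assumption, so this is meaningful.

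Combining these two facts via countable additivity of Lebesgue measure yields
\[
|A^\pm_\ep(E)|=\sum_{i\in\Lambda^\pm_\ep(E)}|A_i^\ep|=(\#\Lambda^\pm_\ep(E))\,\ep^d|A|,
\]
from which the claim $\#\Lambda^\pm_\ep(E)=|A^\pm_\ep(E)|/(\ep^d|A|)$ follows by dividing through. There is no genuine obstacle here; the only point worth being slightly careful about is that $\Lambda^\pm_\ep(E)$ may a priori be infinite (if $E$ is unbounded), in which case both sides of the identity are $+\infty$ and the equality still holds under the convention $\infty=\infty$.
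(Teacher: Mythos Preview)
The paper states this lemma without proof, treating it as immediate from the definitions. Your argument is correct and is exactly the natural justification: disjointness of the tiles from \eqref{devRd}, the scaling $|A_i^\ep|=\ep^d|A|$, and countable additivity. There is nothing to add; your handling of the possibly infinite case is also fine, though in all applications within the paper the set $E$ is bounded and the count is finite.
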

We say $E$ is a cube if $E=[0,R)^d+x$ with some $x\in\R^d,R>0.$ We prepare lemmas related to weak star topology of $L^\infty(\R^d)=L^1(\R^d)^*.$
We denote $\duL{g}{h}=\int gh dx$ for $g\in L^\infty(\R^d)=L^1(\R^d)^*,h\in L^1(\R^d).$
\begin{lem}\label{denseL1}
Let $\{g_\ep\}\subset L^\infty(\R^d)$ be bounded and $g\in L^\infty(\R^d).$ If \[\duL{g_\ep}{1_E}\to\duL{g}{1_E}\]
for any cube $E$, $g_\ep\to g$ weakly star in $L^\infty(\R^d).$ 
\end{lem}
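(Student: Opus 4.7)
The plan is a standard dual-space density argument. Set $M = \sup_\ep \|g_\ep\|_{L^\infty(\R^d)} + \|g\|_{L^\infty(\R^d)}$, which is finite by the boundedness hypothesis. The goal is to prove $\duL{g_\ep}{h} \to \duL{g}{h}$ for every $h \in L^1(\R^d)$.

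First, by linearity of $\duL{\cdot}{\cdot}$ in the second slot and the hypothesis on cubes, the convergence extends to every finite real linear combination $s = \sum_{n=1}^N c_n 1_{E_n}$ of indicators of cubes. Let $\mathcal{S}$ denote the set of such simple functions; then $\duL{g_\ep}{s} \to \duL{g}{s}$ for every $s \in \mathcal{S}$.

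The key step is to show that $\mathcal{S}$ is dense in $L^1(\R^d)$. Two standard facts do the job: (i) simple functions supported on sets of finite measure are dense in $L^1(\R^d)$; and (ii) for any measurable $E \subset \R^d$ with $|E| < \infty$ and any $\delta > 0$, there exists a finite union $U$ of cubes with $|E \triangle U| < \delta$. Fact (ii) uses outer regularity of Lebesgue measure together with the observation that every open subset of $\R^d$ can be written as a countable disjoint union of half-open cubes of the form $[0,R)^d + x$, finitely many of which carry all but $\delta$ of the measure. This is the only nonroutine ingredient.

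Finally, given $h \in L^1(\R^d)$ and $\delta > 0$, pick $s \in \mathcal{S}$ with $\|h - s\|_{L^1} < \delta$ and split
\[
|\duL{g_\ep - g}{h}| \leq |\duL{g_\ep - g}{h-s}| + |\duL{g_\ep - g}{s}| \leq M\delta + |\duL{g_\ep - g}{s}|.
\]
The last term tends to $0$ as $\ep \to 0$ by the previous step, so $\limsup_{\ep \to 0} |\duL{g_\ep - g}{h}| \leq M\delta$. Since $\delta > 0$ is arbitrary, this yields the desired weak-$\ast$ convergence.
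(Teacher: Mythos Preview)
Your proof is correct and follows essentially the same approach as the paper: both reduce to the density in $L^1(\R^d)$ of the linear span of cube indicators, justified via density of simple functions, outer regularity of Lebesgue measure, and the decomposition of open sets into countable disjoint unions of cubes. You have simply supplied the routine $\delta$-splitting details that the paper leaves implicit.
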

\begin{proof}
If follows from the fact that the vector space generated by $\{1_E|E:\mbox{cube}\}$ is dense at $L^1(\R^d)$.
And the fact follows from the facts that the set of simple functions on $\R^d$ is dense in $L^1(\R^d),$ the Lebesgue measure is outer regular and any open set can be represented as the union of disjoint countable cubes.
\end{proof}
\begin{lem}\label{prodws}
If $f_\ep\to f$ in $L^2(\R^d),|f_\ep|\leq 1$ for $\ep\ll 1$ and $g_\ep\to g$ weakly star in $L^\infty(\R^d),$
we have $f_\ep g_\ep\to fg$ weakly star in $L^\infty(\R^d).$
\end{lem}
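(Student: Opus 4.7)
The plan is to verify the weak-star convergence directly against test functions: I need to show $\int f_\ep g_\ep h\,dx\to\int fgh\,dx$ for every $h\in L^1(\R^d)$. First I would collect two preliminaries. Since $f_\ep\to f$ in $L^2(\R^d)$, a subsequence converges a.e., and the pointwise bound $|f_\ep|\le 1$ transfers to $|f|\le 1$ a.e.; in particular $f\in L^\infty(\R^d)$, so $fh\in L^1(\R^d)$ whenever $h\in L^1(\R^d)$. Moreover, weak-star convergent sequences in a dual space are bounded (Banach--Steinhaus), so $M:=\sup_\ep\|g_\ep\|_{L^\infty}<\infty$.

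Next I would use the standard product splitting
\[
\int(f_\ep g_\ep-fg)h\,dx=\int g_\ep(f_\ep-f)h\,dx+\int(g_\ep-g)(fh)\,dx.
\]
The second term tends to $0$ by the hypothesis $g_\ep\to g$ weakly star in $L^\infty(\R^d)$ tested against the $L^1$ function $fh$. For the first term, the uniform bound on $g_\ep$ gives
\[
\left|\int g_\ep(f_\ep-f)h\,dx\right|\le M\int|f_\ep-f||h|\,dx,
\]
so the task reduces to showing $\int|f_\ep-f||h|\,dx\to 0$.

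This last step is the only nontrivial point, because $L^2$ convergence does not in general imply convergence against an $L^1$ weight. I would argue by the subsequence principle: every subsequence of $\{f_\ep\}$ has a further subsequence along which $f_\ep\to f$ a.e.; along such a subsequence the integrand $|f_\ep-f||h|$ is dominated by $2|h|\in L^1(\R^d)$, so dominated convergence yields the limit $0$. Since every subsequence admits a further subsequence converging to $0$, the original sequence converges to $0$, which concludes the proof. The main (mild) obstacle is precisely this dominated-convergence step, where one has to extract an a.e.-convergent subsequence and invoke the subsequence trick to upgrade the conclusion back to the full sequence.
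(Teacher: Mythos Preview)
Your argument is correct. The splitting, the use of Banach--Steinhaus to bound $\sup_\ep\|g_\ep\|_{L^\infty}$, the observation $|f|\le 1$ a.e., and the subsequence/dominated-convergence step for $\int|f_\ep-f||h|\,dx$ all go through as written.

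The route differs from the paper's. The paper does not test against a general $h\in L^1(\R^d)$; instead it invokes its \Cref{denseL1} to reduce the weak-star statement to checking $\duL{f_\ep g_\ep}{1_E}\to\duL{fg}{1_E}$ for cubes $E$. After the same product split, the first term is then handled by Cauchy--Schwarz,
\[
|\duL{g_\ep(f_\ep-f)}{1_E}|\le c\,\|f_\ep-f\|_{L^2(\R^d)}\|1_E\|_{L^2(\R^d)},
\]
which works precisely because $1_E\in L^2$. Your approach trades this reduction-to-cubes lemma for the a.e.-subsequence extraction plus dominated convergence. The paper's version is a one-line estimate once \Cref{denseL1} is available; your version is self-contained and does not rely on that density lemma, at the cost of the subsequence bookkeeping. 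Both are equally valid and comparably elementary.
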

\begin{proof}
The existence of a subsequence of $f_\ep$ converging to $f$ a.e. gives $|f|\leq 1$ a.e. 
The assertion follows from $c:=\sup_{\ep>0}\norm{g_\ep}_{L^\infty(\R^d)}<\infty,$ \Cref{denseL1}, and
$|\ev{f_\ep g_\ep-fg,1_E}|
\leq c\norm{f_\ep-f}_{L^2(\R^d)}\norm{1_E}_{L^2(\R^d)}+|\ev{g_\ep-g,f1_E}|$ for any cube $E.$
\end{proof}
\subsection{Error corrector \texorpdfstring{$w^\ep$}{}}\label{corr}
By \eqref{tmu}, we have $\frac{\max_k a_{\ep,k}}{\ep}\to0.$ Thus $\displaystyle\max_{k\leq m} a_{\ep,k}$ $<C\ep$ for $\ep\ll 1$ (recall $C>0$ is chosen to satisfy \eqref{holesintiles}).
\[
w_{0,k}^\ep(r)
=\begin{cases}
\frac{\log a_{\ep,k}-\log r}{\log a_{\ep,k}-\log C\ep}&(d=2)\\
\frac{(a_{\ep,k})^{-d+2}-r^{-d+2}}{(a_{\ep,k})^{-d+2}-(C\ep)^{-d+2}}&(d\geq3)
\end{cases}~(a_{\ep,k}\leq r\leq C\ep),
\]
\[
B_{\ep,k}=\SU_{i\in\Lambda^-_\ep(F_k),j\leq N_k}B(x_{i,j}^\ep,C\ep),~~
B_\ep=\SU_{k=1}^m B_{\ep,k},\]
\[
w^\ep(x)=\begin{cases}
0&(x\in T_\ep)\\
w_{0,k}^\ep(|x-x_{i,j}^\ep|)&(x\in B(x_{i,j}^\ep,C\ep)\setminus B(x_{i,j}^\ep,a_{\ep,k}))\\
1 & (x\notin B_\ep)
\end{cases}.
\]
Then we have
\begin{equation}\label{harmwep}
\la w^\ep=0~{\rm on}~B_\ep\setminus T_\ep.
\end{equation}
and \eqref{H2}.
We need the limit of $1_{B_{\ep,k}}$ to analyze $w^\ep.$
\begin{lem}\label{bep}
$1_{B_{\ep,k}}\to\frac{N_k|B(0,C)|}{|A|}1_{F_k}=\frac{N_k C^d S_d}{d|A|}1_{F_k}$ weakly star in $L^\infty(\R^d).$
\begin{proof}
Let $E$ be a cube.
By $|B_{\ep,k}\cap A_i^\ep|=\begin{cases}
N_k|B(0,C\ep)|&(i\in\Lambda^-_\ep(F_k))\\
0&(i\notin\Lambda^-_\ep(F_k))
\end{cases},$ \Cref{counttiles} and $B_{\ep,k}\subset F_k,$ we have
\begin{align*}
\frac{|A^-_\ep(E\cap F_k)|}{\ep^d| A|}N_k|B(0,C\ep)|
&=|B_{\ep,k}\cap A^-_\ep(E\cap F_k)|
\leq\duL{1_{B_{\ep,k}}}{1_E}\\
&\leq\frac{|A^+_\ep(E\cap F_k)|}{\ep^d| A|}N_k|B(0,C\ep)|.
\end{align*}
By \Cref{capbou,tilesup,tileinf},
\[\frac{|A^-_\ep(E\cap F_k)|}{\ep^d| A|}N_k|B(0,C\ep)|
\to\frac{|E\cap F_k|N_k|B(0,C)|}{|A|}
=\ev{\frac{N_k|B(0,C)|}{|A|}1_{F_k},1_E}.\]
These, \Cref{denseL1} and $|B(0,C)|=\frac{S_d C^d}{d}$ imply the assertion.
\end{proof}
\end{lem}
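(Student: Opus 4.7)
The plan is to invoke Lemma~\ref{denseL1}: since $\norm{1_{B_{\ep,k}}}_{L^\infty(\R^d)}\leq 1$ uniformly in $\ep$, it suffices to show that $\duL{1_{B_{\ep,k}}}{1_E}=|B_{\ep,k}\cap E|$ converges to $\frac{N_k|B(0,C)|}{|A|}|E\cap F_k|$ for every cube $E\subset\R^d$.

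For a fixed cube $E$, I would refine $B_{\ep,k}\cap E$ across the partition $\R^d=\SU_{i\in\Lambda}A_i^\ep$. By construction, whenever $i\in\Lambda^-_\ep(F_k)$ the tile $A_i^\ep$ lies inside $F_k$ and contains the $N_k$ disjoint balls $B(x_{i,j}^\ep,C\ep)$, so $|B_{\ep,k}\cap A_i^\ep|=N_k|B(0,C\ep)|$; for $i\notin\Lambda^-_\ep(F_k)$ that intersection is empty. In particular $B_{\ep,k}\subset F_k$. Summing over tiles with $i\in\Lambda^-_\ep(E\cap F_k)$ (which sit entirely inside $E\cap F_k$, giving a lower bound) and over tiles with $i\in\Lambda^+_\ep(E\cap F_k)$ (the only tiles that can contribute to $B_{\ep,k}\cap E$ at all, giving an upper bound), and using Lemma~\ref{counttiles} to convert cardinalities into normalized volumes, produces the sandwich
\[\frac{|A^-_\ep(E\cap F_k)|}{\ep^d|A|}N_k|B(0,C\ep)|\;\leq\;|B_{\ep,k}\cap E|\;\leq\;\frac{|A^+_\ep(E\cap F_k)|}{\ep^d|A|}N_k|B(0,C\ep)|.\]

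To pass to the limit I need $E\cap F_k$ to fit the hypotheses of both Lemmas~\ref{tilesup} and~\ref{tileinf}. The cube $E$ satisfies $|\pd E|=0$, so $E\in\J$; combined with $F_k\in\J$, Lemma~\ref{capbou} gives $|\cls{E\cap F_k}|=|(E\cap F_k)^\circ|$, and since $E\cap F_k$ is bounded and measurable, both quantities equal $|E\cap F_k|$. Hence $|A^\pm_\ep(E\cap F_k)|\to|E\cap F_k|$. Substituting $|B(0,C\ep)|=\ep^d|B(0,C)|=\ep^d C^d S_d/d$, both sides of the sandwich tend to $\frac{N_k|B(0,C)|}{|A|}|E\cap F_k|=\duL{\frac{N_k|B(0,C)|}{|A|}1_{F_k}}{1_E}$, as required.

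The only subtlety is securing the boundary-null property of $E\cap F_k$, which is exactly what Lemma~\ref{capbou} is engineered to deliver; beyond that, the argument is routine bookkeeping, because the per-tile contribution to $B_{\ep,k}$ is pinned down by the disjointness condition $\SU_{j=1}^{N_k}B(x_{i,j}^\ep,C\ep)\subset A_i^\ep$ built into the construction, and the reduction to testing against cube indicators is automatic from Lemma~\ref{denseL1}.
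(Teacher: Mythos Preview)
Your proof is correct and follows essentially the same route as the paper: reduce to cube indicators via \Cref{denseL1}, establish the sandwich $\frac{|A^-_\ep(E\cap F_k)|}{\ep^d|A|}N_k|B(0,C\ep)|\leq|B_{\ep,k}\cap E|\leq\frac{|A^+_\ep(E\cap F_k)|}{\ep^d|A|}N_k|B(0,C\ep)|$ using the per-tile ball count and \Cref{counttiles}, and pass to the limit via \Cref{capbou,tilesup,tileinf}. The only cosmetic difference is that you spell out more explicitly why $E\cap F_k$ inherits the boundary-null property needed for \Cref{tilesup,tileinf}, whereas the paper simply cites \Cref{capbou}.
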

\begin{lem}
We have \eqref{H1} and \eqref{H3}
\end{lem}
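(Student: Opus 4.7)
The plan is to verify \eqref{H1} and \eqref{H3} by direct computation using the explicit harmonic profile $w_{0,k}^\ep$. For \eqref{H1}, note that $w^\ep$ is continuous on $\Omega$ (the values $0$ at $r=a_{\ep,k}$ and $1$ at $r=C\ep$ match across the three pieces of the definition) and piecewise smooth with a bounded weak gradient; hence $w^\ep$ is Lipschitz, and in particular $w^\ep\in H^1(\Omega)$. For \eqref{H3}, it suffices to establish (i) boundedness of $\{w^\ep\}$ in $H^1(\Omega)$ and (ii) strong convergence $w^\ep\to 1$ in $L^2(\Omega)$: since $H^1(\Omega)$ is reflexive, every weak limit point of $\{w^\ep\}$ in $H^1(\Omega)$ exists, and because weak $H^1$-convergence implies weak $L^2$-convergence it must agree with the strong $L^2$-limit $1$, forcing $w^\ep\rightharpoonup 1$ in $H^1(\Omega)$ along the full sequence.

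For (i), $\nabla w^\ep$ is supported in $B_\ep\setminus T_\ep$, and a radial calculation on each annulus $B(x_{i,j}^\ep,C\ep)\setminus\cls{B(x_{i,j}^\ep,a_{\ep,k})}$ gives
\[
\int_{B(x_{i,j}^\ep,C\ep)}|\nabla w^\ep|^2\,dx
=\begin{cases}
\dfrac{2\pi}{\log(C\ep/a_{\ep,k})}&(d=2),\\[4pt]
\dfrac{S_d(d-2)}{a_{\ep,k}^{-d+2}-(C\ep)^{-d+2}}&(d\ge3),
\end{cases}
\]
which is asymptotically $2\pi/|\log a_{\ep,k}|$ or $S_d(d-2)\,a_{\ep,k}^{d-2}$, respectively. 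By \Cref{counttiles} the number of such annuli for each $k$ is $N_k|\Lambda^-_\ep(F_k)|=O(\ep^{-d})$, so summing over $i,j,k$ and applying the critical scaling \eqref{tmu} keeps $\|\nabla w^\ep\|_{L^2(\Omega)}^2$ uniformly bounded.

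The main subtlety is the $L^2$ convergence in (ii). The naive bound $|w^\ep-1|\le 1_{B_\ep}$ is \emph{insufficient}, because by \Cref{bep} the quantity $|B_\ep\cap\Omega|$ has a strictly positive limit; the required smallness must come from $1-w_{0,k}^\ep$ itself being uniformly close to zero on most of each annulus, which is exactly where the harmonic profile helps. A direct polar-coordinate computation, using the substitution $r=C\ep e^{-s}$ for $d=2$ and the estimate $1-w_{0,k}^\ep(r)\lesssim(a_{\ep,k}/r)^{d-2}$ for $d\ge 3$, shows that each ball contributes $o(\ep^d)$ to $\|w^\ep-1\|_{L^2}^2$, with \eqref{tmu} providing the precise decay needed in each dimension. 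Summing over the $O(\ep^{-d})$ balls in $B_\ep\cap\Omega$ yields $\|w^\ep-1\|_{L^2(\Omega)}\to 0$, which combined with (i) gives \eqref{H3}.
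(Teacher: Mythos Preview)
Your argument is correct, and in particular the direct $L^2$ estimate of $1-w^\ep$ via the radial integrals works in every dimension as you outline (the contribution of each ball is indeed $o(\ep^d)$ uniformly in $i,j$, with the constant depending only on $k$; summing over the $O(\ep^{-d})$ balls intersecting $\Omega$ then gives $\|w^\ep-1\|_{L^2(\Omega)}\to 0$). Combined with the gradient bound, this yields \eqref{H3} exactly as you say.

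The paper takes a different route for \eqref{H3}. Instead of proving strong $L^2$ convergence, it passes to an arbitrary weakly convergent subsequence in $H^1(\Omega)$ with limit $w$ and identifies $w=1$ indirectly: on $\Omega\setminus\bigcup_k F_k$ one has $w^\ep\equiv 1$, so $w=1$ there; on each $\Omega\cap F_k$ the paper uses that $w^\ep 1_{F_k\setminus B_{\ep,k}}=1_{F_k\setminus B_{\ep,k}}$, then invokes \Cref{bep} (the weak-$*$ limit $1_{B_{\ep,k}}\to c_k 1_{F_k}$) together with \Cref{prodws} and Rellich to deduce $w(1-c_k)1_{F_k}=(1-c_k)1_{F_k}$, whence $w=1$ since $c_k<1$. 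Your approach is more elementary and self-contained, bypassing \Cref{bep,prodws} at this stage, at the cost of a dimension-by-dimension computation. The paper's approach is less explicit but reuses machinery (\Cref{bep,prodws}) that is in any case needed later for \eqref{H5}, so within the paper's structure it is the more economical choice.
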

\begin{proof}
For $i\in\Lambda_\ep^-(F_k),~j\leq N_k,~k\leq m,$ We have 
\begin{align*}\norm{\grad w^\ep}_{L^2(B(x_{i,j}^\ep,C\ep)\setminus\cls{B(x_{i,j}^\ep,a_{\ep,k})})}^2
&=S_d\int_{a_{\ep,k}}^{C\ep}|\pd_r w_{0,k}^\ep(r)|^2r^{d-1}d r\\
&=S_d\begin{cases}
\frac{1}{\log C\ep-\log a_{\ep,k}}&(d=2)\\
\frac{d-2}{(a_{\ep,k})^{-d+2}-(C\ep)^{-d+2}}&(d\geq3)
\end{cases},
\end{align*}
which along with $|w^\ep|\leq 1$ implies $w^\ep$ is an extension of an $H_{loc}^1(B_\ep\setminus T_\ep)$ function by the boundary values on $\pd(B_\ep\setminus T_\ep).$ Thus, $\grad w^\ep$ in the distributional sense coincides with the pointwise, classical derivative and
\[\norm{\grad w^\ep}_{L^2(A_i^\ep)}^2
=\begin{cases}
\frac{N_kS_d}{\log C\ep-\log a_{\ep,k}}&(i\in\Lambda^-_\ep(F_k),~d=2)\\
\frac{N_kS_d(d-2)}{(a_{\ep,k})^{-d+2}-(C\ep)^{-d+2}}&(i\in\Lambda^-_\ep(F_k),~d\geq3)\\
0&(i\notin\bigcup_{k\leq m}\Lambda^-_\ep(F_k))
\end{cases}.\]
Using \eqref{tmu} for them, we have $ c:=\sup_{\ep>0,i\in\Lambda}\ep^{-d}\norm{\grad w^\ep}_{L^2(A_i^\ep)}^2<\infty.$
Thus $\norm{\grad w^\ep}_{L^2(A_i^\ep)}^2\leq c\ep^d.$
It and \Cref{tilesup,counttiles} imply
\[\norm{\grad w^\ep}_{L^2(\Omega)}^2
\leq\norm{\grad w^\ep}_{L^2(A^+_\ep(\Omega))}^2
\leq\frac{|A_\ep^+(\Omega)|}{\ep^d|A|}c\ep^d
\leq\frac{c|\bigcup_{x\in\Omega}B(x,1)|}{|A|}~(\ep\ll 1),
\]
which together with $|w^\ep|\leq 1$ implies \eqref{H1}, and $\{w^\ep\}\subset H^1(\Omega)$ is bounded.

Consider any subsequences of $\{w^\ep\}$ (we still denote $w^\ep$) which converge weakly in $H^1(\Omega),$ and let $w=\mbox{w-}\lim_{\ep\to0}w^\ep.$ We show $w=1.$
Let $F=\sqcup_k F_k.$
Rellich's theorem gives $w^\ep1_{\R^d\setminus F}=1_{\R^d\setminus F}$ tend to $w1_{\R^d\setminus F}=1_{\R^d\setminus F}$ in $L^2(\Omega).$ Thus, $w=1$ a.e. on $\Omega\setminus F.$ On the other hand, \Cref{bep} gives $1_{F_k\setminus B_{\ep,k}}=1_{F_k}(1-1_{B_{\ep,k}})\to 1_{F_k}(1-c_k1_{F_k})=(1-c_k)1_{F_k}$ weakly star in $L^\infty(\R^d)$ where $c_k=\frac{N_k|B(0,C)|}{|A|}.$ Hence 
$w^\ep1_\Omega1_{F_k\setminus B_{\ep,k}}=1_\Omega1_{F_k\setminus B_{\ep,k}}$ tends to $w1_\Omega(1-c_k)1_{F_k}=1_\Omega(1-c_k)1_{F_k}$ weakly star in $L^\infty(\R^d)$ for each $k$ by \Cref{prodws}.
Since $0<c_k<1$ by \eqref{holesintiles}, we have $w=1$ on $\Omega\cap F_k.$ Since $\R^d=(\R^d\setminus F)\cup(\sqcup_k F_k),$ we have $w=1$ on $\Omega.$
\end{proof}
We use a special function to analyze a distribution $-\la w^\ep.$ Let
\[q_0^\ep(r)
=\frac{r^2-(C\ep)^2}{2}~(0\leq r\leq C\ep),
\]
\[
q^\ep(x)
=\begin{cases}
q_0^\ep(|x-x_{i,j}^\ep|)&(x\in B(x_{i,j}^\ep,C\ep))\\
0&(x\notin B_\ep)
\end{cases}.
\]
Then we have
\begin{equation}\label{qepder}
-\la q^\ep=-d~(x\in B_\ep),~
\pd_r q_0^\ep(C\ep)
=C\ep,~
q_0^\ep(C\ep)=0.
\end{equation}
Now we decompose the restricted distribution $(-\la w^\ep)|_{H_0^1(\Omega_\ep)}$ by using $q^\ep.$
\begin{lem}\label{lawep}
Suppose $v\in H_0^1(\Omega_\ep).$ Then we have
\[
\duS{-\la w^\ep}{v}=\sum_{k\leq m}\frac{\pd_r w_{0,k}^\ep(C\ep)}{C\ep}\qty(\int_{B_{\ep,k}}\grad q^\ep\cdot\grad v d x+d\duS{1_{B_{\ep,k}}}{v}).
\]
\end{lem}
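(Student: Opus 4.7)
The plan is to unfold the distributional pairing $\duS{-\la w^\ep}{v}$ by integrating by parts on the annular regions where $\grad w^\ep$ is supported, then to reshape the resulting sphere integrals by a second Green identity applied to $q^\ep$. First, since $v\in H_0^1(\Omega_\ep)$ and $T_\ep\cap\Omega$ is closed in $\Omega$, the trivial extension $\tilde v\in H_0^1(\Omega)$ is admissible, and because $w^\ep\in H^1(\Omega)$ the pairing unfolds as $\duS{-\la w^\ep}{v}=\int_\Omega\grad w^\ep\cdot\grad\tilde v\,dx$.

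By the preceding lemma the distributional gradient of $w^\ep$ coincides with the classical one, which is supported in the disjoint union of annuli $B(x_{i,j}^\ep,C\ep)\setminus\cls{B(x_{i,j}^\ep,a_{\ep,k})}$, where $w^\ep$ is radial and harmonic by \eqref{harmwep}. Since $\tilde v$ vanishes on $\pd B(x_{i,j}^\ep,a_{\ep,k})\subset T_\ep$, Green's identity on each annulus collapses to the outer boundary:
\[
\int_{B(x_{i,j}^\ep,C\ep)\setminus\cls{B(x_{i,j}^\ep,a_{\ep,k})}}\grad w^\ep\cdot\grad\tilde v\,dx=\pd_r w_{0,k}^\ep(C\ep)\int_{\pd B(x_{i,j}^\ep,C\ep)}v\,d\sigma.
\]
Summing over $k\leq m$, $i\in\Lambda_\ep^-(F_k)$ and $j\leq N_k$ yields
\[
\duS{-\la w^\ep}{v}=\sum_{k\leq m}\pd_r w_{0,k}^\ep(C\ep)\sum_{i,j}\int_{\pd B(x_{i,j}^\ep,C\ep)}v\,d\sigma.
\]

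To convert these sphere integrals into the target form, I apply Green's identity to $(q^\ep,v)$ on each ball $B(x_{i,j}^\ep,C\ep)$. The identities $-\la q^\ep=-d$ on $B_\ep$ and $\pd_r q_0^\ep(C\ep)=C\ep$ from \eqref{qepder} give, after summing over $(i,j)$ with $i\in\Lambda_\ep^-(F_k)$ and identifying $\int_{B_{\ep,k}}v\,dx=\duS{1_{B_{\ep,k}}}{v}$,
\[
\int_{B_{\ep,k}}\grad q^\ep\cdot\grad v\,dx=-d\,\duS{1_{B_{\ep,k}}}{v}+C\ep\sum_{i,j}\int_{\pd B(x_{i,j}^\ep,C\ep)}v\,d\sigma.
\]
Solving for the sphere sum and substituting into the expression above produces exactly the claimed identity. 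There is no serious obstacle beyond bookkeeping; the only point requiring some care is the validity of the trace and Green's identity for $\tilde v\in H^1$ on the smooth balls and annuli, which is standard Sobolev trace theory.
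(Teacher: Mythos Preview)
Your proposal is correct and follows essentially the same route as the paper: both arguments reduce $\duS{-\la w^\ep}{v}$ to $\sum_k\pd_r w_{0,k}^\ep(C\ep)\int_{\pd B_{\ep,k}}v\,d\sigma$ via harmonicity of $w^\ep$ on the annuli together with the vanishing of $v$ on $\pd T_\ep$, and then rewrite the sphere integrals using Green's identity for $q^\ep$ on the balls $B_{\ep,k}$ with \eqref{qepder}. The only differences are cosmetic (order of the two integrations by parts, and your explicit mention of the zero extension $\tilde v$).
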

\begin{proof}
By \eqref{qepder} and integration by parts,
\[
\int_{B_{\ep,k}}\grad q^\ep\cdot\grad v d x
=C\ep\dmas{\pd B_{\ep,k}}{v}-d\duS{1_{B_{\ep,k}}}{v}\]
for $v\in H_0^1(\Omega_\ep).$
By assumption, $\dmas{\pd T_{\ep,k}}{v}=0.$ Using them and \eqref{harmwep}, we have
\begin{align*}
\duS{-\la w^\ep}{v}
&=\sum_{k\leq m}\int_{B_{\ep,k}\setminus T_{\ep,k}}\grad w^\ep\cdot\grad v d x
=\sum_{k\leq m}\pd_r w_{0,k}^\ep(C\ep)\dmas{\pd B_{\ep,k}}{v}\\
&=\sum_{k\leq m}\frac{\pd_r w_{0,k}^\ep(C\ep)}{C\ep}\qty(\int_{B_{\ep,k}}\grad q^\ep\cdot\grad v d x+d\duS{1_{B_{\ep,k}}}{v}).
\end{align*}
This completes the proof.
\end{proof}
The following lemma is very similar to \eqref{H5}.
\begin{lem}\label{H5p}
Suppose that $v^\ep\in H_0^1(\Omega_\ep)$ and $v^\ep\to v$ weakly in $H_0^1(\Omega),$ Then 
\[
\duS{-\la w^\ep}{v^\ep}\to\duS{\mu_d\sum_{k=1}^m\tilde{\mu_k}N_k1_{F_k}}{v}.\]
\end{lem}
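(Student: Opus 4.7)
The plan is to start from \Cref{lawep} applied to $v = v^\ep$, which presents the pairing as
\[\duS{-\la w^\ep}{v^\ep}=\sum_{k\leq m}\alpha_k^\ep\bigl(I_k^\ep + d\,J_k^\ep\bigr),\]
with $\alpha_k^\ep := \pd_r w_{0,k}^\ep(C\ep)/(C\ep)$, $I_k^\ep := \int_{B_{\ep,k}}\grad q^\ep\cdot\grad v^\ep\,dx$, and $J_k^\ep := \duS{1_{B_{\ep,k}}}{v^\ep}$. The proof reduces to computing the limit of each factor and then assembling constants.

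For $\alpha_k^\ep$ I would differentiate $w_{0,k}^\ep$ directly from its definition. The resulting expression is, up to a constant depending only on $C$ and $d$, the quantity appearing on the left of \eqref{tmu}: when $d=2$ one uses that $\log C\ep$ is negligible against $\log a_{\ep,k}$, and when $d\geq 3$ that $(C\ep)^{-d+2}$ is negligible against $(a_{\ep,k})^{-d+2}$. Hypothesis \eqref{tmu} then yields
\[\alpha_k^\ep\to\frac{\tilde{\mu_k}}{C^d}\begin{cases}1 & (d=2)\\ d-2 & (d\geq 3)\end{cases}.\]

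For $I_k^\ep$, note that $\grad q^\ep(x)=x-x_{i,j}^\ep$ on $B(x_{i,j}^\ep,C\ep)$, so $|\grad q^\ep|\leq C\ep$ pointwise on $B_{\ep,k}$. Since $|B_{\ep,k}|$ is uniformly bounded in $\ep$ by \Cref{counttiles,tileinf} and $\{\grad v^\ep\}$ is bounded in $L^2(\Omega)$, Cauchy--Schwarz gives $I_k^\ep = O(\ep)\to 0$. For $J_k^\ep$, Rellich's theorem gives $v^\ep\to v$ strongly in $L^2(\Omega)$ (with zero extension outside $\Omega$), while \Cref{bep} gives $1_{B_{\ep,k}}\to\frac{N_k|B(0,C)|}{|A|}1_{F_k}$ weakly star in $L^\infty(\R^d)$. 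The pairing argument in the proof of \Cref{prodws} then yields
\[J_k^\ep\to\frac{N_k|B(0,C)|}{|A|}\int_{F_k}v\,dx.\]

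Finally I would combine the three limits. Using $|B(0,C)|=S_dC^d/d$, the factor $C^d$ in the denominator of $\lim\alpha_k^\ep$ cancels with the $C^d$ from $|B(0,C)|$, and the prefactor $d$ in front of $J_k^\ep$ cancels the $d$ in the denominator of $|B(0,C)|$; what remains is exactly $\mu_d\tilde{\mu_k}N_k\int_{F_k}v\,dx$, as desired. I expect the only real bookkeeping obstacle to be the asymptotic analysis of $\alpha_k^\ep$: the two functional forms for $d=2$ and $d\geq 3$ must be reduced in parallel to the single scaling appearing in \eqref{tmu}. The rest is an application of \Cref{lawep,bep,prodws} and Rellich compactness.
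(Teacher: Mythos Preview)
Your proposal is correct and follows essentially the same route as the paper: apply \Cref{lawep}, use \eqref{tmu} for the limit of $\alpha_k^\ep$, bound $I_k^\ep$ by $C\ep$ times a quantity controlled by the weak $H^1$-boundedness of $v^\ep$, and handle $J_k^\ep$ by splitting into a Rellich piece and a weak-star piece via \Cref{bep}. The only cosmetic difference is that the paper bounds $I_k^\ep$ directly by $C\ep\,\sup_\delta\|v^\delta\|_{W^{1,1}(\Omega)}$ rather than via Cauchy--Schwarz, and your appeal to \Cref{counttiles,tileinf} for the boundedness of $|B_{\ep,k}|$ is unnecessary since the integral is effectively over $\Omega$.
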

\begin{proof}
By \eqref{tmu}, we have
$\displaystyle\frac{\pd_r w_{0,k}^\ep(C\ep)}{C\ep}\to\frac{\tilde{\mu_k}}{C^d}\cross
\begin{cases}
1&(d=2)\\
d-2&(d\geq3)
\end{cases}.$
We also have
\[\abs{\int_{B_{\ep,k}}\grad q^\ep\cdot\grad v^\ep d x}
\leq C\ep\sup_{\delta>0}\norm{v^\delta}_{W^{1,1}(\Omega)}\to 0.\]
Rellich's theorem gives $|\duS{1_{B_{\ep,k}}}{v^\ep-v}|
\leq\norm{1}_{L^2(\Omega)}\norm{v^\ep-v}_{L^2(\Omega)}\to 0.$
It and \Cref{bep} give
$\duS{1_{B_{\ep,k}}}{v^\ep}
=\duS{1_{B_{\ep,k}}}{v^\ep-v}$
$+\duL{1_{B_{\ep,k}}}{1_\Omega v}$
$\to\duS{\frac{N_k C^d S_d}{d|A|}1_{F_k}}{v}.$
The assertion follows from these limit and \Cref{lawep}.
\end{proof}
\subsection{Proof of \texorpdfstring{\Cref{resu}}{}}\label{end}
\begin{proof}
Since $V=\mu_d\sum_{k=1}^m\tilde{\mu_k}N_k 1_{F_k}\in L^\infty(\Omega)=L^1(\Omega)^*\subset W^{-1,\infty}(\Omega),$ we have \eqref{H4}.
We shall verify \eqref{H5}.
Indeed, the multiplier of $\varphi:H^1(\Omega)\to H_0^1(\Omega)$ is a bounded operator.
Thus, $\varphi v^\ep\to \varphi v$ weakly in $H_0^1(\Omega).$ It and \Cref{H5p} imply \eqref{H5}.
Since we already checked \eqref{H1}--\eqref{H3} in \Cref{corr}, \Cref{resu} follows from \Cref{CM}.
\end{proof}
\subsection*{Acknowledgement.}
The author thanks to the referees for their suggestions in the improvement of the paper.
\bibliographystyle{plain}
\bibliography{Cite/CM,Cite/K,Cite/NRC,Cite/RT,Cite/AG,Cite/AGst}

\begin{thebibliography}{1}

\bibitem{CM}
D.~Cioranescu and F.~Murat.
\newblock A strange term coming from nowhere.
\newblock In {\em Topics in the Mathematical Modelling of Composite Materials.
  Progress in Nonlinear Differential Equations and Their Applications}, pages
  45--93. Birkh{\"a}user Boston, 1997.

\bibitem{NRC}
P.~Dondl, K.~Cherednichenko, and F.~R{\"o}sler.
\newblock Norm-resolvent convergence in perforated domains.
\newblock {\em Asymptot. Anal.}, 110:163--184, 06 2017.

\bibitem{AG}
A.~Giunti.
\newblock Convergence rates for the homogenization of the poisson problem in
  randomly perforated domains.
\newblock {\em Netw. Heterog. Media}, 16(3):341--375, 2021.

\bibitem{AGst}
A.~Giunti, R.~H{\"o}fer, and J.J.~L. Vel{\'a}zquez.
\newblock Homogenization for the poisson equation in randomly perforated
  domains under minimal assumptions on the size of the holes.
\newblock {\em Comm. in PDEs}, 43:1377--1412, 2018.

\bibitem{K}
S.~Kaizu.
\newblock The robin problems on domains with many tiny holes.
\newblock {\em Proc. Japan Acad. Ser. A Math. Sci.}, 61(7):39--42, 1985.

\bibitem{RT}
J.~Rauch and M.~Taylor.
\newblock Potential and scattering theory on wildly perturbed domains.
\newblock {\em J. Funct. Anal.}, 18(1):27--59, 1975.

\end{thebibliography}

\end{document}